\providecommand{\U}[1]{\protect\rule{.1in}{.1in}}
\DeclareMathOperator{\Alt}{Alt}
\DeclareMathOperator{\Sym}{Sym}
\DeclareMathOperator{\SL}{SL}
\DeclareMathOperator{\GL}{GL}
\DeclareMathOperator{\Id}{Id}
\DeclareMathOperator{\supp}{supp}
\newtheorem{theorem}{Theorem}
\newtheorem{lemma}[theorem]{Lemma}
\newtheorem{proposition}[theorem]{Proposition}
\newtheorem{remark}{Remark}
\newcommand{\N}{\mathbb{N}}
\newcommand{\F}{\mathbb{F}}
\begin{document}
\title{Words with few values in finite simple groups}
\author{M. Kassabov and N. Nikolov}
\maketitle

\begin{abstract} We construct words with small image in a given finite alternating or unimodular group. This shows that word width in these groups is unbounded in general.
\end{abstract}

Let $w$ be a group word, i.e., an element of the free group on $x_1, \ldots, x_d$. For a group $G$ we denote the set of values of $w$ by
$G_w:=\{ w(g_1, \ldots, g_d)^{\pm 1} \ | \ g_i \in G\}$
and the verbal subgroup $\langle G_w\rangle$ is $w(G)$.

The study of the images of the word maps dates back to the theory of varieties of groups (see~\cite{HN}) and the work of P.~Hall and his students, for a modern exposition also see~\cite{dan}.

Recently, there has been a lot of progress in understanding $G_w$ when $G$ is a finite group, in particularly a finite simple group. In~\cite{LOST} it was proved that when $G$ is a finite simple group then every element of $G$ is a commutator and in addition every element of $G$ is a product of two squares~\cite{squares}.
More generally~\cite{LS} shows that for any $w \not = 1$ we have $G=G_wG_w$ when $G$ is a sufficiently large finite simple group. In this note, we show that the requirement of the size of $G$
can not be removed, even if we only require that $G = (G_w)^k$ for a fixed $k$.

\begin{theorem}
\label{nobound}
For any $k$ there exist a word $w$ and a finite simple group $G$,
such that $w$ is not an identity in $G$, but $G \not = (G_w)^k$.
\end{theorem}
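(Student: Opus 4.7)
The plan is to prove the theorem by finding, for each $k$, a finite simple group $G$ and a non-identity word $w$ with $|G_w|^k < |G|$; such a bound immediately forces $(G_w)^k \neq G$. Since $G_w$ is a union of conjugacy classes closed under inversion, the task reduces to constructing $w$ so that its image lies in a small collection of small conjugacy classes of $G$. For $G = A_n$ the smallest non-trivial conjugacy classes have size polynomial in $n$ (for instance the class of $3$-cycles has size $n(n-1)(n-2)/3$), while $|G|=n!/2$ grows super-exponentially. So it will suffice to take $n$ large in terms of $k$ and exhibit a word whose image in $A_n$ is contained in a bounded union of small classes.

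For the construction in $G=A_n$ I would begin with the power word $w_1(x)=x^N$: by Bertrand's postulate, pick a prime $p$ with $n/2<p\le n$, and take $N$ to be the $p'$-part of $\exp(A_n)$, so that $N = \exp(A_n)/p$ (since $2p>n$ forces the $p$-part of $\exp(A_n)$ to be exactly $p$). Then $g^N$ has order dividing $p$ for every $g\in A_n$, and because $2p>n$ the only non-identity elements of $A_n$ of order dividing $p$ are single $p$-cycles; hence $G_{w_1}\subseteq\{e\}\cup\{p\text{-cycles}\}$, a set of size roughly $|G|/p$. This settles the case $k=1$ but is far too large for $k\ge 2$, so the argument then iterates: one builds words $w_2,w_3,\dots$ by commutator-and-power operations of the form $w_{r+1}(\vec x,\vec y) = [w_r(\vec x),\, w_r(\vec y)]^{N_r}$, where the fresh exponents $N_r$ are designed to kill newly-appearing cycle structures and to collapse the set of commutators into a strictly smaller union of conjugacy classes at each stage. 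For unimodular $G=\SL_m(\F_q)$ the analogous scheme applies: the initial power (with the characteristic prime) pushes $G_w$ into the unipotent variety, and subsequent commutator-power refinements restrict to a small union of unipotent classes of transvection type.

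The main technical obstacle is verifying that this iteration genuinely shrinks the image at a sufficient rate. In $A_n$ the commutator of two $p$-cycles with overlapping supports can be a rather unconstrained permutation, so the exponents $N_r$ and the precise arrangement of variables in $w_{r+1}$ must be tuned so that the next powering actually collapses the enlarged commutator image rather than leaving it stable. In parallel one has to show that $w$ remains a non-identity on $G$, which I would do by exhibiting explicit tuples $g_1,g_2,\ldots$ producing a known non-trivial output: at the first stage any two $p$-cycles with a prescribed overlap give $[a,b]\neq e$, and one checks by direct computation that this survives the subsequent powers. Balancing the number of refinement steps against the growth of $n$ then yields, for any prescribed $k$, a word $w$ and a simple group $G$ of alternating (or unimodular) type satisfying the conclusion of the theorem.
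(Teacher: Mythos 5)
Your reduction of the theorem to the existence of a word with small image is sound and close in spirit to the paper's: the paper deduces Theorem~\ref{nobound} from Theorem~\ref{alt} by noting that a product of $k$ $3$-cycles is a product of $2k$ transpositions, hence for $n>2k+1$ cannot equal an $n$- or $(n-1)$-cycle; your cardinality count $|G_w|^k<|G|$ does the same job. The genuine gap is in the construction of the word itself, which is the entire content of the paper. Your first step ($x^N$ with image the identity and the $p$-cycles, $p>n/2$) is correct but, as you concede, that class has size comparable to $|G|$. The proposed iteration $w_{r+1}(\vec x,\vec y)=[w_r(\vec x),w_r(\vec y)]^{N_r}$ with \emph{independent} variable tuples is not shown to shrink anything, and there is real reason to doubt it does: a commutator $[a,b]$ of two $p$-cycles in general position is a product of two $p$-cycles, and such products realize a vast set of cycle types; powering by the $q'$-part of the exponent then yields all elements of order dividing $q$ so obtained, which for $q\le n/2$ includes products of several disjoint $q$-cycles (still a large union of classes), while for $q>n/2$ you are back to a single huge class and have gained nothing. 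You name this as ``the main technical obstacle'' but do not resolve it, nor do you give any mechanism for certifying that the word stays non-trivial after each stage.

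The paper's resolution is structurally different and is the idea your sketch is missing: it uses one nested commutator word in which \emph{all} the power conditions $x_1^{m_i}\neq 1$ are imposed on the \emph{same} variable $x_1$, for primes $p_1>\cdots>p_k>3$ chosen (Proposition~\ref{pirmes}) so that $\sum p_i$ is within $2$ of $n$ and each $p_i$ exceeds half of what remains. Non-vanishing of the word then forces $x_1$ to have exactly one cycle of each length $p_i$, hence forces $x_1^{m_0}$ to be a $3$-cycle; the outermost layer is a commutator of two conjugate $3$-cycles, which if non-trivial is an even permutation of support at most $5$, and a final $10$th power leaves only $3$-cycles. Non-triviality is then a separate explicit construction (Lemma~\ref{values}). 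Also note that the number of nesting steps in the paper is dictated by $n$, not by $k$: once the image is the single class of $3$-cycles, letting $n\to\infty$ handles every $k$ at once, so your closing remark about ``balancing the number of refinement steps against $k$'' misreads where the difficulty lies. Without an analogue of the single-variable forcing mechanism, your iteration does not yield a word whose image is a bounded union of small classes, so the proof is incomplete at its central step.
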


We obtain this as an immediate corollary of the following results about alternating groups and
special linear groups.


\begin{theorem}
\label{alt}
For every $n\geq 7, n \not =13$ there is a word $w(x_1,x_2) \in F_2$ such that
$\Alt(n)_w$ consists of the identity and all $3$-cycles. When $n=13$ there is a word
$w(x_1,x_2,x_3) \in F_3$ with the same property.
\end{theorem}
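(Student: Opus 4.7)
My plan is to construct $w$ in two stages, starting from the observation that $\{e\}\cup\{\text{3-cycles}\}$ is a union of conjugacy classes closed under inversion in $\Alt(n)$ (for $n\ge 5$ the class of $3$-cycles does not split), so it has the right shape to be $\Alt(n)_w$ for some word $w$. The plan is then to force $\Alt(n)_w$ into this set, and separately to exhibit a substitution producing a single $3$-cycle so that $w$ is not an identity.

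The first stage produces a word $u$ whose image in $\Alt(n)$ lies in the set $T=\{g\in\Alt(n):g^3=1\}$, i.e.\ the identity together with products of disjoint $3$-cycles. When $9\nmid\exp(\Alt(n))$, which is the case for $n\le 8$, the one-variable power word $u(x)=x^{\exp(\Alt(n))/3}$ does this directly. For $n\ge 9$ the exponent is divisible by $9$ and no one-variable power works, since any exponent killing $9$-cycles in $\Alt(n)$ also kills $3$-cycles. One is forced into a genuinely multi-variable construction, for instance a suitable commutator of powers of $x_1,x_2$ raised to an exponent chosen so that, after cubing, all $3^k$-cycle contributions with $k\ge 2$ collapse while room remains for $3$-cycles to survive as values.

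The second stage, which I expect to be the main obstacle, composes $u$ with an \emph{isolator} word $s$ in order to cut the image down from $T$ to $\{e\}\cup\{\text{3-cycles}\}$. The guiding principle is that two elements of $T$, being products of disjoint $3$-cycles, generate a subgroup of $\Alt(n)$ whose structure is tightly controlled by the overlap pattern of their supports: it essentially embeds into a product of small alternating groups on the orbits, together with fixed points outside. Inside such a subgroup one can hunt for uniform relations that extract at most one $3$-cycle. The candidate word is then $w(x_1,x_2)=s(u_1,u_2)$, where $u_1,u_2$ are substitutions of $u$; containment $\Alt(n)_w\subseteq\{e\}\cup\{\text{3-cycles}\}$ would hold by construction, and non-triviality would be verified by exhibiting a specific evaluation inside a small explicit subgroup of $\Alt(n)$ (e.g.\ a copy of $\Alt(5)$ or $\Alt(6)$) realising a $3$-cycle value.

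The case $n=13$ appears exceptional precisely because the two-variable isolator in the second stage cannot be realised: the cycle type $1\cdot 3^4$ in $\Alt(13)$, the extremal configuration of four disjoint $3$-cycles admissible on $13$ letters, is combinatorially too rigid to be broken using only two input variables. A third variable supplies the extra freedom that the isolator needs. The crux is thus the construction of $s$ in the second stage, which should require a careful case analysis across cycle types and a detailed use of the subgroup structure of $\Alt(n)$; the first stage and the non-triviality verification are comparatively routine once $s$ is in hand.
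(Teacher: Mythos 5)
There is a genuine gap: your proposal is an outline whose decisive step is missing. The entire weight of the theorem rests on the ``isolator'' word $s$ of your second stage, and you do not construct it --- you only express the hope that one can ``hunt for uniform relations.'' The premise behind that hope is also shaky: two products of disjoint $3$-cycles in $\Alt(n)$ are not confined to a small, tightly controlled subgroup; for suitable choices they generate all of $\Alt(n)$, so there is no a priori reason a fixed word in two such elements should always return the identity or a single $3$-cycle while sometimes returning a nontrivial one. Your first stage is likewise not carried out for $n\geq 9$ (you concede no power word works and gesture at ``a suitable commutator''), and your diagnosis of why $n=13$ is exceptional --- rigidity of the cycle type $3^4$ --- is not the actual obstruction.

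The paper's mechanism sidesteps the isolator problem entirely. One first proves (via a Bertrand-type result on primes in $(3m,4m)$) that for $n>7$, $n\neq 13$ there are primes $p_1>\cdots>p_k>3$ with $n-\sum_i p_i\in\{3,4,5\}$ and each $p_i$ exceeding half the letters not yet consumed; $n=13$ fails here for purely arithmetic reasons (the only admissible $p_1$ is $7$, leaving $6$ letters, and no prime $>3$ finishes the job), which is why a third variable and a two-step iteration are needed there. With $M=\exp\Alt(n)$, $m_i=M/p_i^{l_{p_i}}$ and $m_0=M/3^{l_3}$, the word
\[
w_1=[(x_1^{m_0})^{[\cdots[[x_2,x_1^{m_1}],x_1^{m_2}],\cdots,x_1^{m_k}]},\,x_1^{m_0}]
\]
can be nontrivial only if every $x_1^{m_i}\neq 1$; the lower bounds on the $p_i$ then force $x_1$ to have exactly one $p_i$-cycle for each $i$, so $x_1^{m_0}$ is a single $3$-cycle. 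Hence any nontrivial value of $w_1$ is a commutator of two conjugate $3$-cycles, supported on at most $5$ points, and $w=w_1^{10}$ is the identity or a $3$-cycle; a separate explicit substitution (the content of the paper's Lemma~\ref{values}) shows a $3$-cycle is actually attained. None of these ingredients --- the prime decomposition, the ``all powers nontrivial forces the cycle type'' rigidity, the support-at-most-$5$ bound, the final tenth power --- appear in your proposal, so the missing step is not a routine verification but the core of the argument.
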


Of course the same result holds for $\Alt(5)$ by taking $w=x_1^{10}$.
However, note that $\Alt(6)$ is a genuine exception,
because it has an outer automorphism which send the $3$-cycles to the double $3$-cycles.

Similar result holds for $\Sym(n)$. In fact with the exception of $\Sym(7)$ the words constructed in Theorem~\ref{alt}
also satisfy $\Sym(n)_w = \Alt(n)_w$.

We obtain similar result for the groups $\SL_n(q)$.

\begin{theorem}
\label{SL}
For every $n,q \geq 2$ with the possible exception of $\SL_4(2)$ there is a word $w(x_1,x_2) \in F_2$ such that
$\SL_n(q)_w$ consists of the identity and the conjugacy class of all transvections.

For $\SL_4(2)$ the word $w=x_1^{2.3.5.7}$ takes values the identity, the transvections and the double transvections with Jordan normal form $J_2(1) \oplus J_2(1)$.
\end{theorem}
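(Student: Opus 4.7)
The strategy mirrors Theorem~\ref{alt}: construct $w$ in layers, using a power word to isolate $p$-parts and a commutator word to cut down complexity. The exceptional case $\SL_4(2)$ is handled by the explicit word $x^{210}$ in the statement.

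For general $\SL_n(q)$ with $(n,q)\neq(4,2)$, let $p$ be the characteristic of $\F_q$. Every $g\in\SL_n(q)$ admits a Jordan decomposition $g=g_sg_u$ with $g_s$ semisimple, $g_u$ unipotent, and $g_sg_u=g_ug_s$. By the Chinese Remainder Theorem one chooses $N$ with $N\equiv 0$ modulo the $p'$-part of the exponent of $\SL_n(q)$ and $N\equiv 1$ modulo the $p$-part; then $g^N=g_u$, so the image of the power word $x^N$ is precisely the set of unipotent elements.

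The second layer is an Engel-type commutator that, on unipotent inputs in a common Borel, lands in $\{1\}\cup T$. The reason: the unipotent radical $U$ of a Borel is nilpotent of class $n-1$, and its deepest nontrivial term $\gamma_{n-1}(U)$ is the root subgroup of the longest root, whose nonidentity members are transvections. Writing $e_k(x,y)=[x,y,y,\ldots,y]$ with $k$ copies of $y$, the word $e_{n-2}$ has weight $n-1$, so $e_{n-2}(u_1,u_2)\in\gamma_{n-1}(U)$ whenever $u_1,u_2\in U$. The candidate final word is then $w(x_1,x_2)=e_{n-2}(x_1^N,x_2^N)^M$, with $M$ chosen by a second CRT selection to kill any residual semisimple part that arises when $u_1,u_2$ do \emph{not} lie in a common Borel.

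The main obstacle lies precisely in this latter case: even in $\SL_2(3)$ the commutator of two opposite root-subgroup elements is semisimple of order $4$, not unipotent. Controlling $e_{n-2}(u_1,u_2)$ for $u_1,u_2$ in distinct Borels requires a Lie-algebra calculation inside the matrix algebra $M_n(\F_q)$: writing $N_i=u_i-1$, the operator $\mathrm{Ad}(u_2)-\Id$ is nilpotent of index at most $2n-1$, which allows one to expand $e_{n-2}(u_1,u_2)-1$ as a polynomial in $N_1,N_2$ and bound its rank together with the order of its semisimple part. A careful choice of $M$ then collapses each value either to the identity or to a rank-one unipotent, i.e.\ a transvection. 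Nontriviality of $w$ is checked by exhibiting a single pair $g_1,g_2$ whose $N$-th powers lie in a common $U$ and produce a nontrivial element of $\gamma_{n-1}(U)$. Finally, for the excluded case, in $\SL_4(2)\cong\Alt(8)$ the element orders are $1,2,3,4,5,6,7,15$, so $x^{210}$ is trivial except on order-$4$ elements; the two unipotent Jordan types of order $4$, namely $J_3\oplus J_1$ and $J_4$, square respectively to a transvection and a double transvection.
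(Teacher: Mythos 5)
Your treatment of $\SL_4(2)$ is correct and agrees with the paper's. For the general case, however, there is a genuine gap at exactly the point you flag as ``the main obstacle,'' and the route you sketch for getting past it cannot work. The image of $x^N$ is indeed the full set of unipotent elements, but that set is too large: for $u_1,u_2$ unipotent and not in a common Borel (say two regular unipotent elements in opposite Borels, which typically generate all of $\SL_n(q)$), the element $v=e_{n-2}(u_1,u_2)$ is essentially unconstrained. In particular $v-\Id$ carries no a priori rank bound --- the nilpotency of $\mathrm{Ad}(u_2)-\Id$ controls the length of the polynomial expansion of $v$ in $N_1=u_1-1$ and $N_2=u_2-1$, not the rank of $v-\Id$ --- and the unipotent part $v_u$ of $v$ can contain a Jordan block of size at least $3$. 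No choice of the outer exponent $M$ repairs this: if $p\nmid M$ then $v_u^M$ has the same Jordan type as $v_u$ and so is not a transvection, while if $p\mid M$ then $w$ annihilates every transvection as well (transvections have order $p$), killing precisely the values you need to keep. So the containment $\SL_n(q)_w\subseteq\{1\}\cup\{\text{transvections}\}$ fails for your word.

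The missing idea is a rank control installed \emph{before} any commutator is taken. The paper achieves this with a Zsigmondy prime for $q^{n-2}-1$: Lemma~\ref{transvection} produces exponents $A,B$ such that whenever $x^A\neq 1$ and $x^B\neq 1$, the semisimple part of $x$ is forced to have an eigenvalue generating $\F_{q^{n-2}}$, hence lies in $T\times\GL_2(q)$ with $T$ a torus of order $q^{n-2}-1$; its repeated eigenvalue then has multiplicity exactly $2$, the unipotent part is a single $J_2$, and $x^B$ is a transvection. The word $w_1=[x^B,(x^B)^{[y,x^A]}]$ is therefore either trivial or a commutator of two transvections, so $w_1-\Id$ has rank at most $2$; after ruling out the Jordan types $J_3$ and $J_2\oplus J_2$ (which are not commutators of two transvections) the power $w_1^{(q-1)(q^2-1)}$ kills the residual semisimple part sitting inside a conjugate of $\SL_2(q)\times\Id_{n-2}$. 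Nontriviality is then checked by an explicit choice of $x$ (transvection times Zsigmondy torus generator) and $y$. Your Engel word has no analogue of this first filter, and that is where the argument breaks; the power word $x^N$ alone selects far too many unipotent elements for any subsequent commutator--power combination to tame.
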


\medskip

\begin{proof}[Proof of Theorem~\ref{nobound}]
This is an immediate consequence of Theorem~\ref{alt}, because for $n > 2k +1$ some elements
in $\Alt(n)$ can not be written as product of less than $k+1$ $3$-cycles.
\end{proof}

\section{Proof of Theorem~\ref{alt}}

Everywhere in this section $n$ is an integer bigger than $6$. Assume first that $n \not =7,13$.

\begin{proposition}
\label{pirmes}
For each $n>7, n \not =13$ there exist some $k \geq 1$ and some prime numbers
$$
p_1>p_2> \cdots > p_k>3
$$
such that $n-\sum_i p_i \in \{3,4,5\}$ and $p_i> (n- \sum_{j=1}^{i-1}p_j)/2$
for each $i=1, \ldots, k$.
\end{proposition}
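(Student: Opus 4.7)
I would argue by strong induction on $n$. Before diving into cases, note that the halving inequality $p_i > m_i/2$ (with $m_i := n - \sum_{j<i} p_j$) gives $m_{i+1} = m_i - p_i < m_i/2$, so the residuals strictly halve; consequently $p_{i+1} \le m_{i+1} < p_i$, making the monotonicity $p_1 > p_2 > \cdots$ in the conclusion automatic once each prime is chosen to satisfy the halving condition relative to the current residual.

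The base of the induction consists of the cases $8 \le n \le N_0$ (with $n \ne 13$) for some explicit threshold $N_0$, say $N_0 = 25$. I would verify each by direct inspection: in almost every case a single prime $p_1$ with $n/2 < p_1 \le n-3$ and $n - p_1 \in \{3,4,5\}$ exists (for instance $p_1 = 5, 5, 7$ for $n = 8, 9, 12$ and $p_1 = 11, 13$ for $n = 14, 17$), and in the few sporadic exceptions (such as $n = 19$) one exhibits an explicit two-prime chain, e.g.\ $19 = 11 + 5 + 3$ where $3$ is the residual. The genuine exclusion of $n = 13$ is forced: the only prime exceeding $13/2$ and at most $10$ is $7$, which yields residual $6 \notin \{3,4,5\}$, and any longer chain would have $\sum p_i \ge 5 + 7 = 12$, already exceeding the target $10$.

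For the inductive step $n > N_0$, I would invoke Bertrand's postulate to find a prime $p_1 \in (n/2, n-3]$, set $m := n - p_1 < n/2$, and distinguish two subcases. If $m \in \{3,4,5\}$, take $k = 1$ and stop; otherwise, provided $m \ge 8$ and $m \ne 13$, apply the induction hypothesis to $m$ to obtain primes $p_2 > \cdots > p_k > 3$ satisfying the halving condition relative to $m$, which concatenated with $p_1$ give the desired decomposition of $n$ (the monotonicity $p_1 > p_2$ being automatic as noted above).

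The main technical obstacle is precisely that certain choices of $p_1$ land in the \emph{bad} set $\{n-6,\, n-7,\, n-13\}$, producing residuals $m \in \{6, 7, 13\}$ for which the induction cannot be invoked. Avoiding these three specific values reduces to a prime-density statement: the interval $(n/2, n-3]$ must contain at least one prime outside this set. For $n$ past a modest threshold this holds by any mild quantitative refinement of Bertrand's postulate — indeed, $n-6$ and $n-7$ are consecutive and hence (for $n$ large) at most one is prime — and any residual small $n$ where this might fail can simply be absorbed into the enlarged list of base cases. This is the only place where the argument is genuinely case-sensitive, and it is clean because the set of forbidden residuals is finite and independent of $n$.
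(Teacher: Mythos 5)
Your proposal is correct and follows essentially the same route as the paper: induct on $n$ after choosing a prime $p_1 \in (n/2,\, n-3]$ that avoids the forbidden set $\{n-6,\, n-7,\, n-13\}$ (so the residual is never $6$, $7$, or $13$), with small $n$ handled by inspection. The one place where you are vaguer than the paper is the prime-density step for large $n$: where you invoke an unspecified ``mild quantitative refinement of Bertrand's postulate,'' the paper makes this concrete by citing the existence of a prime in $(3m,4m)$, which for $n \geq 51$ produces a prime in $(n/2,\, n-14]$ — automatically clear of all three bad values — and checks $n \leq 50$ by inspection.
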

\begin{proof}
Indeed take a prime $p_1 \in (n/2, n-3]$ distinct from $n-6,n-7,n-13$ and continue by induction.
By inspection such a prime always exists for all $n\leq 50$ and for $n>50$ there is always a prime
in the interval $(n/2,n-14]$. Indeed by~\cite{Loo} there is always a prime in the interval $(3m,4m)$ for $m \in \N$, $m>1$. Now take $m$ so that $m-1 < n/6 \leq m$ and than any prime in $(3m,4m)$ will do since $4m < 2n/3 + 4 \leq n-13$ as $n \geq 51$.
\end{proof}

\medskip

Let $M$ be the exponent of $\Alt(n)$ and write $M$ as a product of prime powers $M= \prod_{p \leq n} p^{l_p}$. Let $k$ and $p_1, \ldots, p_k$ be as given in Proposition \ref{pirmes}. Define $m_i=M/p_i^{l_{p_i}}$ for $i=1,2, \dots,  k$.
We set $p_0=3$ and $m_0:=M/3^{l_3}$.

Consider now the word $w_1$ in $x_1,x_2$ defined by the left normed commutator
\[
w_1(x_1,x_2)=[(x_1^{m_0})^{[\cdots [[x_2,x_1^{m_1}],x_1^{m_2}], \cdots,x_1^{m_k}]},x_1^{m_0}].
\]
The first result we need is

\begin{lemma}
\label{values}
The word $w_1(x_1,x_2)$ takes some $3$-cycle as a value in $\Alt(n)$.
\end{lemma}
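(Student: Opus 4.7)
The plan is to exhibit explicit $x_1, x_2 \in \Alt(n)$ for which $w_1(x_1, x_2)$ is a nontrivial 3-cycle. Take $x_1 \in \Alt(n)$ with cycle type consisting of one $p_i$-cycle for each $i = 1, \ldots, k$, a single 3-cycle, and $n - 3 - \sum_i p_i \in \{0, 1, 2\}$ fixed points; such an element exists by Proposition~\ref{pirmes}. Let $T_0$ denote the support of the 3-cycle and $T_i$ the support of the $p_i$-cycle, so these are pairwise disjoint. Because $x_1$ has order $3 p_1 \cdots p_k$ with pairwise distinct prime factors, and each $m_i = M/p_i^{l_{p_i}}$ (with $p_0 := 3$) is coprime to $p_i$ while divisible by the lengths of all other cycles of $x_1$, the element $a_i := x_1^{m_i}$ fixes every point outside $T_i$ and acts on $T_i$ as a nontrivial power of the original cycle. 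In particular $a_0$ is a 3-cycle on $T_0$, and each $a_i$ with $i \geq 1$ is a $p_i$-cycle on $T_i$.

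The crux of the proof is to choose $x_2 \in \Alt(n)$ so that the iterated commutator $b := [\cdots[[x_2, a_1], a_2], \cdots, a_k]$ satisfies $|b(T_0) \cap T_0| = 1$. I would take $x_2$ to be a product of two ``tracks'' of short cycles, each touching one point of $T_0$, two points of $T_1$, and (when $k \geq 2$) one point of some $T_j$ with $j \geq 2$, the $T_1$ entries being aligned so that conjugation by $a_1$ produces a nontrivial shift. Concretely, one can use pairs of transpositions of the form $(\tau, \sigma)(\rho, a_1^{-1}(\sigma))$ with $\tau \in T_0$, $\sigma \in T_1$, $\rho \in T_j$ (for $k \geq 2$), or two 3-cycles $(\tau, \sigma, \sigma')$ with $\sigma, \sigma' \in T_1$ (for $k = 1$). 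A direct computation shows that $c_1 := [x_2, a_1]$ moves two of the three points of $T_0$ outside $T_0$ while fixing the third, and an induction on $j$ verifies that each subsequent commutator $c_j := [c_{j-1}, a_j]$ preserves the property $|c_j(T_0) \cap T_0| = 1$: the $a_j^{-1}$-shift moves the previously activated image within $T_j$, and $c_{j-1}^{-1}$ returns it to a point outside $T_0$.

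Given that $b$ satisfies the above, $a_0^b$ is a 3-cycle whose support meets $T_0 = \supp(a_0)$ in exactly one element. A short direct calculation (for instance $[(1,4,5),(1,2,3)] = (1,5,3)$) verifies that any commutator of two 3-cycles whose supports intersect in a single point is itself a 3-cycle, so $w_1(x_1, x_2) = [a_0^b, a_0]$ is the desired nontrivial 3-cycle in $\Alt(n)$. The main obstacle is the inductive verification that the intermediate commutators do not disrupt the one-fixed-point-in-$T_0$ property; this requires carefully choosing the auxiliary elements $\sigma, \rho$ in $x_2$ so that the compositions $c_{j-1}^{-1} \circ a_j^{-1} \circ c_{j-1}$ restricted to $T_0$ stay outside $T_0$ at every step.
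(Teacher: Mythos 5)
Your reduction is the same as the paper's: pick $x_1$ with one cycle of each length $3,p_1,\dots,p_k$, so that $a_i=x_1^{m_i}$ is a $p_i$-cycle on $T_i$, reduce to finding $x_2$ with $|b(T_0)\cap T_0|=1$ for $b=[\cdots[[x_2,a_1],a_2],\dots,a_k]$, and finish with the (correct) observation that a commutator of two $3$-cycles whose supports meet in one point is a $3$-cycle. The gap is in the construction of $x_2$, which is exactly the hard part. Write $c_1=[x_2,a_1]$, $c_j=[c_{j-1},a_j]$. For a point $\tau\in T_0$ (fixed by every $a_j$) one has $\tau^{c_j}=\bigl(\bigl(\tau^{c_{j-1}^{-1}}\bigr)^{a_j^{-1}}\bigr)^{c_{j-1}a_j}$, so if $\tau^{c_{j-1}^{-1}}\notin\supp(a_j)=T_j$ then $\tau^{c_j}=\tau$, and inductively $\tau^{c_l}=\tau$ for all $l\ge j$. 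Hence to keep $\tau_1,\tau_2$ outside $T_0$ at the end you need the relevant image of $\tau_i$ under $c_{j-1}^{\pm1}$ to land in $T_j$ for \emph{every} $j=2,\dots,k$; since the $T_j$ are pairwise disjoint, a ``track'' that touches only $T_0$, $T_1$ and \emph{one} $T_j$ with $j\ge2$ cannot achieve this once $k\ge3$ (and $k\ge 3$ does occur, e.g.\ $n=60$ forces three primes). Indeed, with your $x_2=(\tau_1,\sigma_1)(\rho_1,a_1^{-1}(\sigma_1))\cdots$ a direct computation gives $\tau_1^{c_1^{-1}}=a_1(\sigma_1)\in T_1$, so already $c_2$ fixes $\tau_1$ and your claimed invariant $|c_j(T_0)\cap T_0|=1$ fails at $j=2$; reversing the orientation to $(\rho_1,a_1(\sigma_1))$ rescues $j=2$ but fails at $j=3$. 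The sentence ``the $a_j^{-1}$-shift moves the previously activated image within $T_j$'' is where the argument breaks: the activated image sits in $T_1$ or in the single $T_j$ you chose, not in the support of the next $a_j$.

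For comparison, the paper avoids this difficulty by not fixing the supports $T_i$ in advance: it takes $x_2$ to be an $n$-cycle $v_0$ and chooses each $p_i$-cycle $a_i$ adaptively inside the complement $\Omega_{i-1}$ of the previous supports, interleaved with the cycle structure of $v_{i-1}$ so that $v_i=[v_{i-1},a_i]$ again acts as one long segment on $\Omega_i$ (this is where the hypothesis $p_i>|\Omega_{i-1}|/2$ from Proposition~\ref{pirmes} is used). That invariant is what survives all $k$ commutator steps. If you want to salvage your approach, $x_2$ must carry each tracked point of $T_0$ through $T_2,T_3,\dots,T_k$ in succession, i.e.\ the tracks must meet every $T_j$, and you would then have to verify the absence of collisions among the finitely many auxiliary points --- essentially reconstructing the paper's bookkeeping in a different coordinate system.
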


\begin{proof}
When $x_1$ is a product of disjoint  $p_i$-cycles, one for each $i$ then each $x_i^{m_i}$ is a $p_i$-cycle.
Hence it is enough to prove that there is some collection of disjoint $p_i$-cycles $a_i$ for $i=0, 1, \ldots, k$ and an element $y$ such that $[a_0^{[y,a_1, \ldots, a_k]},a_0]$ is a 3-cycle.

First observe that from the definition of $p_i$ we have that each
$p_i> (n-\sum_{j=1}^{i-1}p_j)/2$.

We define $y'$ to be any $n$-cycle on $\{1, \ldots n\}$. Note that $y'$ may not be even but if needed we will modify it with a transposition at the end.

Put $\Omega_0=\{1, \ldots, n\}$ and $v_0=y'$.
Assume by induction that for some $1 \leq i<k$ the set $\Omega_{i-1}$ is a segment of a cycle of $v_{i-1}$, i.e., $v_{i-1}$ acts on the elements $b_1,b_2, \ldots,..b_t$ of $\Omega_{i-1}$ as $b_j^{v_{i-1}}=b_{j+1}$ for $j=1, \ldots, t-1$, where $t = n -\sum^{i-1}_{j=1}p_j$. Note that we don't specify the image of $b_t$ and the preimage of $b_1$ under $v_{i-1}$,

This is clearly true for $i=1$.

Recall that the support $\supp (\pi)$ of a permutation $\pi$ is the set of points moved by $\pi$. For $i=1, \ldots, k-1$ define inductively the $p_i$-cycle $a_i$ such that
\begin{itemize}
\item
$\supp(a_i) \subset \Omega_{i-1}$ and
\item
The set $\Omega_{i-1} \backslash \supp(a_i)$ is a segment of a cycle of $[v_{i-1},a_i]$.
\end{itemize}

Indeed suppose that the elements of $\Omega_{i-1}$ are $\{b_1, \ldots b_t\}$ where $t=|\Omega_{i-1}|$ where $v_{i-1}$ sends $b_j$ to $b_{j+1}$ for $j=1,2, \ldots, t-1$. Set $t'=t-p_i$. Define the $p_i$-cycle $a_i$ to be any $p_i$-cycle on $\Omega_{i-1} \backslash \{b_2,b_4, \ldots, b_{2t'}\}$ which begins with
$$
a_i: \quad b_1 \mapsto b_3 \mapsto b_5 \mapsto \ldots \mapsto b_{2t'+1}
$$
Since  $t'< t/2$ we can always find such a $p_i$-cycle. Put $\Omega_i= \{b_2,b_4, \ldots, b_{2t'}\}= \Omega_{i-1} \backslash \supp(a_i)$. Now $a_i^{v_{i-1}}$ is a $p_i$-cycle which acts on $\Omega_i$ as $b_2 \mapsto b_4 \mapsto \cdots \mapsto b_{2t'}$ and therefore
$[v_{i-1},a_i]=a_i^{-v_{i-1}}a_i$ acts on $\Omega_i$ as $b_{2t'} \mapsto b_{2t'-2} \mapsto \cdots \mapsto b_2$. This justifies that we can make the choice of $a_i$ as claimed.

Next set $v_i:=[v_{i-1},a_i], \Omega_i= \Omega_{i-1} \backslash \mathrm{supp}(a_i)$ and continue by induction.
We reach an element $v_{k-1}=[y,a_1, \ldots, a_{k-1}]$ acting nontrivially on the complement $\Omega_{k-1}$ of $\cup_{i=1}^{k-1} a_i$ as a segment of a cycle.

When choosing $a_k$ we require that again $\mathrm{supp}(a_k) \subset \Omega_{k-1}$ but this time we ask that $v_k:=[v_{k-1},a_k]$ moves exactly $2$ points of $\Omega_k:=\Omega_{k-1} \backslash \supp(a_k)$ and these are sent outside $\Omega_k$. To show that we can indeed choose such $a_k$  suppose that $v_{k-1}$ acts on the elements
$\{b_1, \ldots, b_t\} = \Omega_k$ as before: $b_j^{v_{k-1}}=b_{j+1}$ for $j=1, \ldots, t-1$. Note that $t>p_k+1$ since $t-p_k \in \{3,4,5\}$. Let $a_k$ be the $p_k$-cycle
$(b_1b_2 b_4 b_5b_6 \ldots b_{p_k +1})$, and put $\Omega_{k}= \Omega_{k-1} \backslash \mathrm{supp}(a_k)$. Then $[v_{k-1},a_k]=(b_1b_2b_{p_k+2})(b_3b_4b_5)$ fixes all elements of $\Omega_{k}$ except $b_{p_k+2}, b_3$ and sends $b_{p_{k}+2}$ to $b_1 \not \in \Omega_k$ and $b_3$ to $b_4 \not \in \Omega_k$.

Let us now denote by $\alpha, \beta$ the two elements from $\Omega_k$ moved by $v_k$ outside $\Omega_k$.
Suppose that $v_k (\alpha) = \gamma \not \in \Omega_k$ and $v_k(\beta)=\delta \not \in \Omega_k$. Recall that $|\Omega_k|$ is $3$,$4$ or $5$ by the choice of the sequence $p_i$. Choose an element  $\eta \in \Omega_k \backslash \{\alpha, \beta \}$ and put $a_0:=(\eta \alpha \beta)$.
Then $[a_0^{v_k},a_0]= [(\eta \gamma \delta),(\eta \alpha \beta)]= (\alpha \gamma \eta)$.
We have shown that there exists a 3-cycle $a_0$ with support in $\Omega_k$ such that $[v_k,a_0]$ is a $3$-cycle and if the starting full cycle $y'=v_0$ was even we are done. If $v_0$ was odd then replace $v_0$ by $v_0 \tau$ where $\tau$ is any transposition commuting with the $p_1$-cycle $a_1$. Such transposition exists because $p_1<n-2$ by assumption.
The lemma is proved.
\end{proof}

\medskip

We continue with the proof of Theorem~\ref{alt}.

Observe that if $w_1(x_1,x_2)$ is nontrivial then each $x_1^{m_i}$ must be nontrivial for $0 \leq i \leq k$ which means that $x_1$ has a cycle divisible by $p_i$ for each $i$. From $n- \sum_{i=0}^k p_i \in \{0,1,2\}$ and $p_i> (n- \sum_{j=1}^{i-1}p_j)/2$ it must be that $x_i$ has a single $p_i$-cycle for each $i$ and therefore each $x_1^{m_i}$ is a $p_i$-cycle. In particular $x_1^{m_0}$ is a $3$-cycle.

We conclude, that if $w_1(x_1,x_2) \not = 1$ then $x_1^{m_0}$ is a $3$-cycle. In this situation since $w_1(x_1,x_2)$ is a commutator of two $3$-cycles and since $w_1 \not =1$ we obtain that $w_1(x_1,x_2)$ is an even permutation with support of size at most $5$. We now take $w(x_1,x_2):=w_1(x_1,x_2)^{10}$, then $w$ is either trivial or a $3$-cycle. On the other hand Lemma~\ref{values} says that $w_1$ can take value some $3$-cycle and therefore, so can $w$. Theorem~\ref{alt} follows for $n \not =13$.

For $n=13$ we define $w_1=[(x_1^{M/5})^{[x_2,x_1^{M/7}]},x_1^{M/5}]$ and deduce as before that if $w_1 \not =1$ then $x_1^{M/7}$ is a $7$-cycle while $x_1^{M/5}$ is a $5$-cycle. Therefore $w_1$ is a nontrivial commutator of two 5-cycles and so has support of size at most $9$.
Direct inspection or an argument similar to Lemma~\ref{values} shows that $w_1$ takes value a product of a $5$-cycle and a $3$-cycle. Now consider $w_2=[(w_1^{8.5.7})^{[x_3,w_1^{8.9.7}]}, w_1^{8.5.7}]$.
If $w_2 \not = 1$ then $w_1^{8.9.7}$ is a $5$-cycle while $w_1^{8.5.7}$ can only be a $3$-cycle. Also by the same argument as Lemma~\ref{values} $w_2$ takes as values some (and hence all) $3$-cycles. Therefore $w_2$ has support of size at most $5$ and we take $w:=w_2^{10}$ as before.

When $n=7$ the argument is similar: Let
$w_1=[(x_1^{4.5.7})^{[x_2,x_1^{3.5.7}]},x_1^{4.5.7}]$. Again if $w_1 \not =1$ then
$x_1$ has cycles of size divisible by $3$ (since $x_1^{4.5.7} \not = 1$) and cycles of size divisible by 2 (since $x_1^{3.5.7} \not =1$). Now $x_1$ is even and so cannot be a 6-cycle. The only possibility is that $x_1^{4.5.7}$ is a double transposition and
$x_1^{4.5.7}$ is a $3$-cycle. Thus $w_1$ again has support of size at most $5$ and as before we check that $w_1$ takes some $3$-cycle as a value. Take $w=w_1^{10}$.

\begin{remark}
Notice that if $n \not=7$ then the word $w$ evaluated on $\Sym(n)$ takes value the identity and all $3$-cycles. For $n=7$ we take $w=w_1^{10}$ with $w_1=[(x_1^{4.5.7})^{[x_2,x_1^{2.3.5.7}]},x_1^{4.5.7}]$ and then a similar argument shows that $w$ takes only 3-cycles and the identity on $\Sym(7)$.
\end{remark}

\begin{remark}
In the symmetric group $\Sym(n)$ the conjugacy class of transpositions is smaller then the
conjugacy class of $3$-cycles. However, it is impossible to construct a word $w$ whose values are
either $1$ or  transpositions: If the image of $w$ is not inside $\Alt(n)$ then $w$ must have a free variable, say $x_1$ with odd exponent sum $m$. Setting $x_2=x_3= \cdots =1$ we see that the image of $w$ contains $x_1^m$ for any $x_1 \in \Sym(n)$ and in particular any involution is
a value of $w$.
\end{remark}

\begin{remark}
Using the word $w$ one can construct words $w_p$ for any prime $3 <p< n$ such that $\Alt(n)_{w_p}$
consist of the identity and all $p$-cycles. For any $k$ the support of $w'_k=(wz)^kz^{-k}$ is at most $3k$ since it is a product of $k$ $3$-cycles, moreover it can be a $2k+1$-cycle. Therefore $w_p = (w'_{(p-1)/2})^{N_p}$ is either $1$ or a $p$-cycle, where $pN_p$ is the exponent of $\Alt(3(p-1)/2)$.
\end{remark}

\section{Proof of Theorem~\ref{SL}}

Let $q,n>1$ be any integers. A prime $r$ is called a Zsigmondy prime for $q^n-1$ if $r$ divides $q^n-1$ but  does not divide $q^i-1$ for any $1 \leq i <n$. Zsigmondy's theorem states that Zsigmondy primes exist for all $q,n>1$ with the exceptions $n=6,q=2$ and $n=2, q+1=2^s$ some $s \in \N$.
A slight generalization~\cite{R} states that without any exceptions there is always a prime power $r^a$ such that
$r^\alpha| q^{n}-1$ but $r^\alpha$ does not divide $q^i-1$ for any $i < n$.
\begin{lemma}
\label{transvection}
Assume $n \not = 3,4$ or $n=3$ and $q \not = 2,4$. There exist integers $A,B$ such that for any $x\in \SL_n(q)$ if both $x^A$ and $x^B$ are not $1$ then $x^B$ is a transvection in $\SL_n(q)$ (and there exists $x$ such that $x^A$ and $x^B$ are not $1$).

When $n=4$ and $q \not =2,3$ there exist integers $A, \bar A,B$ such that
if $x^A,x^{\bar A},x^B \not =1$ then $x^B$ is a transvection.
\end{lemma}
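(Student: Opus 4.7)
The plan is to choose $B$ so that $x \mapsto x^{B}$ projects $x$ onto its unipotent part $x_{u}$ (via the Jordan decomposition $x = x_{s}x_{u}$), and to choose $A$ (and, for $n=4$, also $\bar A$) so that the non-vanishing hypotheses force the semisimple part $x_{s}$ into a shape whose only commuting unipotents are $1$ and transvections. By the Chinese Remainder Theorem I pick $B$ divisible by $\operatorname{lcm}_{i\le n}(q^{i}-1)$ (killing every semisimple part) and congruent to $1$ modulo $p^{l_{p}}$, where $p^{l_{p}}$ is the exponent of a Sylow $p$-subgroup of $\GL_{n}(q)$ (so that $x_{u}^{B}=x_{u}$). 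Then $x^{B}=x_{s}^{B}x_{u}^{B}=x_{u}$, and it suffices to arrange that $x_{u}$ is either $1$ or a transvection whenever the other non-vanishing hypotheses hold.

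For $n \neq 3, 4$ I set $A = M/r^{\alpha}$, where $M=\exp(\SL_{n}(q))$ and $r^{\alpha}$ is a Roitman prime power for $q^{n-2}-1$. Because $r \neq p$ the full $p$-part of $M$ survives in $A$, so $x_{u}^{A}=1$ and $x^{A}=x_{s}^{A}$; the condition $x^{A}\neq 1$ is thus equivalent to $r^{\alpha}\mid |x_{s}|$, which by the defining property of $r^{\alpha}$ forces $x_{s}$ to have an eigenvalue whose minimal field of definition is exactly $\F_{q^{n-2}}$, so that the characteristic polynomial of $x_{s}$ has an irreducible factor of degree $n-2$. Decompose $\F_{q}^{n}$ accordingly as a direct sum of an $(n-2)$-dimensional $x_{s}$-irreducible subspace $V_{1}$ and a $2$-dimensional complementary invariant subspace $V_{2}$. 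On $V_{1}$ the centraliser of $x_{s}$ is the Singer torus $\F_{q^{n-2}}^{*}$, cyclic of order coprime to $p$, so $x_{u}|_{V_{1}}=1$. On $V_{2}$ the element $x_{u}$ is an arbitrary unipotent of $\GL_{2}(q)$, i.e. either $1$ or $J_{2}$. Hence $x^{B}=x_{u}$ is either $1$ or a transvection of $\SL_{n}(q)$; a witness is obtained by choosing $x_{s}$ to be a Singer cycle on $V_{1}$ together with $\lambda I_{2}$ on $V_{2}$ and multiplying by a transvection supported inside $V_{2}$.

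For $n=4$ the above decomposition breaks down exactly when $V_{2}$ is a second $2$-dimensional irreducible block isomorphic to $V_{1}$: then $\F_{q}^{4}\cong \F_{q^{2}}^{2}$ with $x_{s}$ acting as a scalar $\alpha \in \F_{q^{2}}^{*}$, so the unipotent commutant is $\GL_{2}(\F_{q^{2}})$, producing a \emph{double} transvection $J_{2}\oplus J_{2}$ that must be excluded. In this degenerate configuration the determinant constraint $N_{\F_{q^{2}}/\F_{q}}(\alpha)^{2}=1$ forces $|x_{s}|\mid 2(q+1)$, so choosing $\bar A$ to be a suitable multiple of $2(q+1)\cdot p^{l_{p}}$ gives $x^{\bar A}=1$ in the bad case; imposing $x^{\bar A}\neq 1$ rules it out and restores the previous analysis. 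For $n=3$ there is no $(n-2)$-dimensional Singer to leverage, so I instead take $A=\operatorname{lcm}(3,p^{l_{p}})$. A semisimple $x_{s}\in \SL_{3}(q)$ whose commutant contains a non-transvection unipotent element must have a $3$-dimensional eigenspace, i.e. $x_{s}=\lambda I$ with $\lambda^{3}=1$; for such $x$ one has $x_{s}^{A}=1$ and $x_{u}^{A}=1$, hence $x^{A}=1$. Thus $x^{A}\neq 1$ forces $x_{s}$ to be non-scalar, whose commutant then contains only $1$ and transvections as unipotent elements.

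The main obstacle is the existence of genuine witnesses in the ``good'' configurations: the determinant condition $\det x_{s}=1$ couples $|\lambda|$ and $|\alpha|$ tightly, and precisely for the small excluded pairs $(n,q)\in\{(3,2),(3,4),(4,2),(4,3)\}$ every admissible semisimple part in the transvection-supporting configuration has order dividing the bound chosen for $A$ (or $\bar A$), eliminating all witnesses. Verifying that $q-1$ always contains a prime factor outside $\{3,p\}$ for $n=3$ when $q\notin\{2,4\}$, and that the Roitman/determinant interplay leaves a non-empty set of admissible $(|\alpha|,|\lambda|)$ for $n=4$ when $q\notin\{2,3\}$, is a case-by-case arithmetic check using the existence of Zsigmondy/Roitman primes.
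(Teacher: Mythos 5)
Your overall strategy is the paper's: use the Jordan decomposition $x=x_sx_u$, choose $B$ so that $x^B$ records the unipotent part, and choose $A$ so that $x^A\neq 1$ forces $x_s$ to have an eigenvalue generating $\F_{q^{n-2}}$, whence $x_s$ lies in a torus times $\GL_2(q)$, has at most one repeated eigenvalue (of multiplicity $2$), and $x_u$ is a transvection. Two of your variants are actually more direct than the paper's: for $n=3$ you bound the order of the bad configuration ($x_s$ scalar, $x_u=J_3$) by $\mathrm{lcm}(3,p^{l_p})$ and take that as $A$, and for $n=4$ you choose $\bar A$ to kill the $\F_{q^2}$-scalar case outright via the determinant constraint, whereas the paper routes both through auxiliary prime powers dividing $q-1$. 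These are fine in principle, modulo the witness checks you defer.

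There is, however, a genuine gap in the main case. You set $A=M/r^{\alpha}$ and assert that $x^{A}\neq 1$ is equivalent to $r^{\alpha}\mid |x_s|$. Writing $r^{a}$ for the full $r$-part of $M$, the condition $x^{M/r^{\alpha}}\neq 1$ only says that the $r$-part of $|x|$ is at least $r^{a-\alpha+1}$, which need not be $\geq r^{\alpha}$. When a genuine Zsigmondy prime exists one takes $\alpha=1$ and nothing goes wrong, but the entire reason for invoking Roitman's refinement is the exceptional case, and there the slip is fatal. Concretely, for $\SL_8(2)$ one has $r^{\alpha}=9$ (Roitman for $2^{6}-1=63$) and the $3$-part of the exponent of $\SL_8(2)$ is also $9$, so $A=M/9$ has trivial $3$-part and $x^{A}\neq 1$ forces only $3\mid |x|$. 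Take $x_s$ of order $3$ acting irreducibly on a plane and trivially on the complementary $6$-space, and $x_u$ a non-transvection unipotent (say $J_2\oplus J_2$) supported on that $6$-space: then $x^{A}\neq 1$, $x^{B}=x_u\neq 1$, and $x^{B}$ is not a transvection. The fix is the paper's choice $A=(M/r^{a})\,r^{\alpha-1}$, for which $x^{A}\neq 1$ genuinely forces $r^{\alpha}\mid |x|$. Separately, $n=2$ falls under ``$n\neq 3,4$'' but your recipe is vacuous there ($q^{n-2}-1=0$); it needs the separate trivial observation that every element of order $p$ in $\SL_2(q)$ is a transvection, with $A=1$ and $B$ the $p'$-part of the exponent.
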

\begin{proof}
Assume first that $n>4$.

Let $M$ be the exponent of $\SL_n(q)$ and let $r^\alpha$ be a Zsigmondy prime power for
$q^{n-2}-1$. Suppose $q=p^u$ some $u \in \N$ and let $M=A' r^a=B p^c$ with $A'$ coprime to $r$ and $B$ coprime to $p$ and set $A= A'r^{\alpha-1}$.
If both $x^A$ and $x^B$ are not trivial then both $r^\alpha$ and $p$ divide the order
of $x$. We can write $x=x'x''$ where $x'$ is a semisimple element while $x''$ is a unipotent element and both are in $\langle x \rangle$. Since $p$ divides the order of $x$, the element
$x''$ is non trivial and $x'$ has repeated eigenvalues.

We shall say that an eigenvalue $\lambda$ of a semisimple element $g$ generates $\F_{q^s}$ if $\F_q(\lambda)=\F_{q^s}$. The order of the semisimple element $g$ clearly divides the lowest common multiple of $(q^s-1)$ where $\F_{q^s}$ ranges over all fields generated by the eigenvalues of $g$.

Now, $r^\alpha$ divides the order of $x'$ and therefore
$x'$ has an eigenvalue generating $\F_{q^{c}}$ where $r^\alpha | q^c-1$.
The choice of $r$ implies that $c =n-2$.
Therefore, $x' \in T \times \GL_2(q)$ where $T$ is a torus of size $q^{n-2}-1$ in $\GL_{n-2}(q)$ and $x'$ can only have one repeated eigenvalue (inside $\F_q$) with multiplicity $2$. This gives that $x''$ acts as a Jordan block of size $2$ and $x^B = (x'')^B$ is a transvection.

The remaining cases $n=2,3,4$ are very similar.
The case $n=2$ is almost trivial -- any element in $\SL_2(q)$ of order $p$ is a transvection so we take $A=1$ and $B=M/p$.

When $n=3$ and $q \not =2,4$ take $r^\alpha$ to be any prime power which divides $q-1$ such that $r^\alpha\not=3$ and define $A$ and $B$ as in the case $n>4$. If $x^A\not=1$ and $x^B\not=1$ then $r^\alpha$ divides the order of $x'$ and $x'$ has
multiple eigenvalues -- this implies that all eigenvalues are in $\F_q$ and one has multiplicity $2$
and again $x''$ acts as a simple Jordan block of size $2$.

When $n=4$ and $q \not =2,3$ the same argument needs a small modification. Let $r^{\alpha}$ be a Zsigmondy prime power for $q^2-1$.
We define $A=M/r$ and $B=M/p^c$ as before but there is one additional case to be ruled out -- then $x'$
has two pairs of repeated eigenvalues which generate $\F_{q^2}$. Let $\overline {r}^{\bar \alpha} | q-1$ be a prime power different from $2$ and define $\bar{A}=M/\bar r$. Then
$x^{\bar A} \not=1$  implies  that the order of $x'$ is divisible by $\overline{r}^{\bar \alpha}$
which is not the case if $x'$ has two repeated eigenvalues.
\end{proof}

Using this lemma we can complete the proof of Theorem~\ref{SL}. Let $A, B$ be the integers specified by Lemma \ref{transvection}.
First consider the case $n > 4$.
Define the word  $w_1(x,y)=[x^B, (x^B)^{[y,x^{A}]}]$ -- if either $x^A$ or $x^B$ is
$1$ then so is $w_1$. Otherwise $w_1$ is a commutator of two conjugates of $x^B$.
In particular $w_1$ is a product of two transvection and so $w_1-\Id$ has rank at most 2. Hence at most two of the eigenvalues of $w_1$ can be different from 1 and so the semisimple part of $w_1$ must be contained in a conjugate of $\SL_2(q) \times \Id_{n-2}$. Further, if we suppose that
the unipotent part of $w_1$ is not 1 or a transvection, then (since $w_1 - \Id$ has rank at most 2) $w_1$ itself must be a unipotent Jordan block of size $3$ or a product of two unipotent Jordan blocks of size 2.
However a direct computation shows that such $w_1$ cannot be a commutator of two transvections.
This gives that in all cases $w=w_1^{(q-1)(q^2-1)}$ is a transvection or the identity.

It remains to show that $w$ is nontrivial, for which it is sufficient to show that $w_1$ can take a value some transvection. Let $e_1, \ldots, e_n$ be the standard basis of $\F_q^n$ and take $x=x'x''$ where $x''$ is a transvection in $\SL_2(q) \times \Id_{n-2}$ and $x'$ is a generator for the nonsplit torus in $\Id_2 \times \SL_{n-2}(q)$. Then $x^B$ is a transvection and $x^A$ has order $r$ and lies in
$\Id_2 \times \SL_{n-2}(q)$. To simplify notation put $x^A=a, x^B=b$. Without loss of generality we may assume that $b$ sends $e_1$ to $e_1+e_2$ and fixes all $e_j$ for $j >1$, i.e., $b$ acts as the elementary matrix $E_{1,2}(1)$.

We will show that there exists an element $y \in \SL_n(q)$ such that $e_1^{[y,a]}=e_2$ while $d:=e_2^{[y,a]} \in \langle e_3, \ldots ,e_n \rangle$. Now $b^{[y,a]}$ is a transvection such that $\mathrm{Im}(b^{[y,a]}- \Id)= \langle d \rangle$ and moreover $e_2^{b^{[y,a]}} =e_2+d$. Choose a scalar $\lambda \in F$ such that $e_1':=e_1 + \lambda e_2$ is fixed by $b^{[y,a]}$. Extend $d$ to a basis $\{d, d',d'', \ldots \}$ of $\ker (b - \Id) \cap \ker (b^{[y,a]} - \Id)$. With respect to the basis $\{e'_1,e_2,d,d',d'', \ldots \}$ of $\F_q^n$ the maps $b$ and
$b^{[y,a]}$ act as the matrices $E_{1,2}(1)$ and $E_{2,3}(1)$ respectively, hence $[b,b^{[y,a]}]=E_{1,3}(1)$ is a transvection.

To prove the existence of $y$ note that $a$ does not stabilize any proper subspace of $V:=\langle e_3, \ldots e_n\rangle$ because the order of $a$ is $r^\alpha$. Hence when $n\geq 5$ we may choose three linearly independent vectors vectors $t_1,t_2,t_3 \in V$ such that $t_1^{a^{-1}}=t_2, t_2^{a^{-1}}=t_3$. Define $y \in \SL_n(q)$ such that $e_i^ {y^{-1}}=t_i$, then we have that $e_1^{a^{-y}}=e_2$ and $e_2^{a^{-y}}=e_3$. Since $a$ fixes $e_2$ and sends $e_3$ to some vector $d \in V$ we have proved the existence of $y$ as claimed.
\medskip

The case $n=2$ is very easy because
the only element of order $p$ is a transvection, i.e., we can take $w = x^{q^2-1}$.

When $n=3$ and $q\not=2,4$ we can use the argument for $n >5$ but with the slightly modified choice of $A$ and $B$; when $q = 2$ or $q=4$ one can notice that the square of a unipotent element is a transvection,
therefore it suffices to take $w = x^{M/2}$ where $M$ is the exponent of the group.

When $n=4$ and $q\geq 4$ we consider $w_2=[x^B, (x^B)^{[[y,x^{A}],x^{\bar A}]}]$ -- the same argument shows that if $w_2\not =1$ then $x^B$ is a transvection. Similarly one can show the resulting word $w$ is not trivial.
For $q=3$, again, one uses that the cube of a unipotent element is a transvection, thus it suffices to
take $w=x^{M/3}$.

Finally when $n=4$, $q=2$ the square of a unipotent element is either trivial, a transvection or a product of two commuting transvections, so we can take $w = x^{2.3.5.7}$. In fact observing that $\SL_4(2) \simeq \Alt(8)$ and using Theorem~\ref{alt} we can find a word $w(x_1,x_2)$ whose image consists of the identity and double transvections only.


\begin{remark}
For $n\not=3,4$ the word $w$ constructed above also takes value the identity and all transvections on $\GL_n(q)$.
\end{remark}

\begin{remark}
Similar construction should be possible for other classical groups, but one needs to be more
careful with the possible structures of the unipotent elements.
\end{remark}
\medskip

\textbf{Acknowledgement} We thank the anonymous referee for pointing out an error in an earlier version of the paper.
The first autor was partiually supported by NSF grant DMS-0900932. The second author is supported by an EPSRC grant.

\noindent
M. Kassabov,\\
Cornell University, Ithaca, NY, USA\\
and\\
University of Southampton, Southampton, UK\\
\emph{email:} kassabov@math.cornell.edu

\bigskip

\noindent
N. Nikolov,\\
Imperial College, London, UK \\
\emph{email:} n.nikolov@imperial.ac.uk
\end{document}